\numberwithin{equation}{section}
\newtheorem{thm}{Theorem}[section]
\newtheorem{lemma}{Lemma}[section]
\newtheorem{conj}{Conjecture}[section]
\newtheorem{defn}{Definition}[section]
\theoremstyle{definition}
\newcommand{\diam}{\operatorname{diam}}
\newcommand{\dist}{\operatorname{dist}}
\title{Koebe uniformization of nondegenerate domains with bounded gap-ratio} 
\author{Yi Zhong}
\address{Institute of Mathematical Sciences and Applications, NingboTech University, Ningbo, 315100, China}
\email{yizhong@zju.edu.cn}
\subjclass[2020]{Primary 30C20;
Secondary 30C35}
\keywords{Koebe uniformization, transboundary extremal length, nondegenerate, gap-ratio}
\begin{document}

\begin{abstract}
Koebe uniformization is a fundemental problem in complex analysis. In this paper, we use transboundary extremal length to show that every nondegenerate and uncountably connected domain with bounded gap-ratio is conformally homeomorphic to a circle domain.
\end{abstract}

\maketitle 

\setcounter{tocdepth}{1}
\tableofcontents

\section{Introduction}\label{sec1}
A connected domain $\Omega$ on the Riemannian sphere $\hat{\mathbb{C}}$ is called a circle domain if each component of $\hat{\mathbb{C}}\setminus\Omega$ is either a closed disk or a point. In 1909, P. Koebe \cite{Koebe1} posed the following conjecture, known as the Kreisnormierungsproblem:
\begin{conj}[Koebe's conjecture]
	Any plane domain is conformally homeomorphic to a circle domain in $\hat{\mathbb{C}}$.
\end{conj}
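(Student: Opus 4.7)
The plan is to attack Koebe's conjecture via an exhaustion-and-limit procedure combined with transboundary extremal length. First I would exhaust the plane domain $\Omega$ by finitely connected Jordan subdomains $\Omega_1 \subset \Omega_2 \subset \cdots$ with $\bigcup_n \Omega_n = \Omega$. For each $n$, the classical finite-connectivity theorem of Koebe provides a conformal homeomorphism $\phi_n \colon \Omega_n \to \Omega_n^*$ onto a circle domain, and by normalizing $\phi_n$ at two fixed interior points of $\Omega_1$ these maps form a well-defined sequence whose subsequential limits are the natural candidates for the uniformizing map of $\Omega$.

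The central task is then to extract a subsequential limit $\phi$ and prove that $\phi$ is a conformal homeomorphism of $\Omega$ onto a circle domain. To obtain compactness I would use transboundary extremal length in the style of Schramm: each $\phi_n$ preserves the transboundary modulus of curve families in $\Omega_n$ with respect to the partition induced by the complementary continua, and such modulus bounds translate into equicontinuity estimates for $\phi_n$ via a transboundary analogue of the Koebe distortion theorem. Given equicontinuity, one extracts a limit $\phi$, verifies univalence by a Hurwitz-type argument, and shows that each complementary continuum of $\phi(\Omega)$ is the Carath\'eodory limit of closed disks in $\hat{\mathbb{C}}\setminus\Omega_n^*$. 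Ruling out the possibility that such a limit is a nontrivial continuum rather than a disk or a point is precisely where the argument stands or falls.

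The main obstacle, and indeed the reason the conjecture remains open in this generality, is the ``no-collapse'' step: one must prevent an uncountable family of complementary disks from simultaneously degenerating to non-disk continua in the limit. In the He--Schramm countably connected setting one controls this degeneration by summing moduli over boundary components and appealing to a Baire-category argument, but both tools break down when $\hat{\mathbb{C}}\setminus\Omega$ has uncountably many components of comparable scale. The bounded gap-ratio hypothesis introduced later in the paper is tailor-made to supply a quantitative uniform transboundary modulus bound, and any plausible attack in full generality would have to provide an intrinsic substitute---for instance, a measure-theoretic decomposition of $\hat{\mathbb{C}}\setminus\Omega$ into countably many ``regular'' pieces on each of which a uniform distortion estimate holds, together with a way to recombine these pieces without loss of conformality. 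Constructing such a decomposition, or equivalently extending transboundary extremal length to a framework based on a Radon measure on the space of boundary components, is the decisive and genuinely hardest step, and I would expect the bulk of the proof effort to concentrate there.
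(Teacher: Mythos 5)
There is a genuine gap here, and you in fact name it yourself: the statement you were asked to prove is Koebe's conjecture, which is open, and the paper accordingly states it as a conjecture and proves nothing of the sort in full generality. Your outline --- exhaust $\Omega$ by finitely connected subdomains, uniformize each by Koebe's finite-connectivity theorem (theorem \ref{multi-Koebe}), normalize, extract a locally uniform limit, and then control the limiting boundary components by transboundary extremal length --- is precisely the skeleton of the paper's proof of its actual main result, theorem \ref{nondegenerate}, carried out in section \ref{sec5} together with the extended Carath\'eodory kernel convergence theorem (theorem \ref{ckc}). But the ``no-collapse'' step you flag as the place where the argument stands or falls is not filled in by your proposal; you only gesture at a hoped-for ``measure-theoretic decomposition'' or a Radon-measure extension of transboundary extremal length, with no construction and no estimate. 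A proof that ends by announcing that the decisive step remains to be found is not a proof, so as an argument for the conjecture this fails at exactly the point where every known attack fails.

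It is worth making explicit how the paper closes that step in its special case, since it shows what your missing ingredient would have to look like. The bounded gap-ratio hypothesis yields the well-distributed property (theorem \ref{gapratio-well}): the annuli $A_{2j}$ between the radii $R_j$, whose ratios $R_j/R_{j+1}\le\rho$ are controlled by the gap-ratio bound, give $M$ essentially disjoint curve families, and summing the estimate $l^2\le 4\pi\rho^2(1+\kappa^{-1})\,A(m)/1$ over $j=1,\dots,M$ forces $EL(\Gamma)\le 4\pi\rho^2(1+\kappa^{-1})M^{-1}\to 0$ uniformly in the choice of $\Omega_0$ and $q$. The $\kappa$-nondegeneracy is what converts diameters of complementary components into areas ($m(a)\le\sqrt{\kappa^{-1}\,\mathrm{area}(\ulcorner a\urcorner)}$) so the mass of the extended metric can be bounded; this is the quantitative uniform modulus bound whose absence in general you correctly identify. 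Without a substitute for both hypotheses --- nondegeneracy to control the counting-measure contribution of uncountably many components, and bounded gap-ratio to guarantee enough separated annuli near a fixed boundary component --- the extremal-length contradiction in Part II of the proof of theorem \ref{ckc} (the lower bound $EL(\Gamma^*_n)\ge(\delta^*)^2/((1+\kappa^{-1})\pi R^2)$ against $EL(\Gamma_n)\to 0$) cannot be run, and nothing prevents limits of complementary disks from degenerating to non-disk continua. One peripheral inaccuracy: the He--Schramm resolution of the countably connected case does not proceed by summing moduli plus a Baire-category argument in the way you describe; its core is a fixed-point/topological argument, with Rajala's later exhaustion proof being closer in spirit to what you sketch.
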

In the 1920's, P. Koebe \cite{Koebe2} himself proved the following theorem, which was able to confirm the
conjecture in the finitely connected case:
\begin{thm}\label{multi-Koebe}
	Let $\Omega\subset\hat{\mathbb{C}}$ be a finite multi-connected domain. Then $\Omega$ is conformally homeomorphic to a circle domain.
\end{thm}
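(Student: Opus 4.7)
\medskip
\noindent\textbf{Proof plan.} My plan is to induct on the number $n$ of complementary components of $\Omega$ in $\hat{\mathbb{C}}$. The base case $n = 1$ is the Riemann mapping theorem. For $n \geq 2$, I enumerate the complementary components $E_1, \ldots, E_n$. Conformal maps between planar domains send punctures to punctures, so singleton components are harmless and I may assume each $E_i$ is a nondegenerate continuum. A preliminary Möbius transformation puts $\infty$ into $\Omega$.

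The key construction is Koebe's iterative \emph{Schmiegungsverfahren} (osculation method). For each index $i$, the complement $\hat{\mathbb{C}}\setminus E_i$ is simply connected on the sphere, and the Riemann mapping theorem produces a unique conformal map $g_i$ from $\hat{\mathbb{C}}\setminus E_i$ onto the complement of a round closed disk of radius $r_i = \mathrm{cap}(E_i)$, with the hydrodynamic normalization $g_i(\infty)=\infty$ and $g_i(z) = z + O(1/z)$ at infinity. Starting from $\phi_0 = \mathrm{id}$, I cycle $i_k \equiv k \pmod n$ and set $\phi_k := g_{i_k}\circ \phi_{k-1}$. At each step one complementary component of the current domain is rounded into a circle at the price of perturbing all the others, but only slightly when the other components are already far from that step's target.

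The main obstacle, and the heart of the argument, is to show that $\{\phi_k\}$ converges uniformly on compact subsets of $\Omega$ to an injective conformal map $\phi$ whose image is a circle domain. Normality of $\{\phi_k\}$ on compacta follows from Montel's theorem, since each $\phi_k$ is hydrodynamically normalized at $\infty$ and its image omits all the other $E_j$. The quantitative input is a Koebe-type distortion estimate: the deviation of $g_i$ from the identity on its domain, and in particular the ``roundness defect'' it removes, is controlled by a nonnegative quantity that is summable over the iteration by a capacity/area monotonicity of Grötzsch type. This forces every boundary component of $\phi_k(\Omega)$ to converge in the Hausdorff sense to an actual circle (or point). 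Finally, Hurwitz's theorem upgrades any normal subsequential limit of $\phi_k$ to an injective conformal map, and the hydrodynamic normalization pins down the limit uniquely, so the full sequence converges to a conformal homeomorphism of $\Omega$ onto a circle domain, closing the induction.
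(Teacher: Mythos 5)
The paper offers no proof of Theorem \ref{multi-Koebe}: it is quoted as classical and attributed to Koebe's 1920 paper \cite{Koebe2}, so there is no in-paper argument to compare against. Your plan is in fact essentially Koebe's own strategy from that source, the Schmiegungsverfahren: cyclically apply the hydrodynamically normalized exterior Riemann map that rounds one complementary component at a time, and show the composites converge. Your reduction steps are fine: the M\"obius normalization, filling in the point components (which are isolated in a finitely connected domain, hence removable punctures), and the normal-families/Hurwitz bookkeeping at the end.

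The genuine gap is that the entire analytic content of the theorem is concentrated in the one sentence you do not prove: that the ``roundness defect'' removed at each step is summable over the iteration by ``a capacity/area monotonicity of Gr\"otzsch type.'' As written this is the name of a lemma, not a lemma. To close it you would need to (i) fix a quantitative measure of non-circularity of a continuum $E$, e.g. $1-r_{\mathrm{in}}/r_{\mathrm{out}}$ for the best sandwiching annulus centered appropriately, or $\log\bigl(\mathrm{cap}(E)/r_{\mathrm{in}}\bigr)$; (ii) prove a distortion estimate showing that the normalized exterior map of a nearly round continuum is close to the identity away from that continuum, with error controlled by the defect; and (iii) show that one full cycle of $n$ osculation steps contracts the maximal defect by a definite factor depending only on $n$ and the initial separation of the components, so that the defects decay geometrically and the telescoped compositions form a Cauchy sequence. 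Without (iii), normality only yields subsequential limits; nothing in your argument forces distinct subsequential limits to agree, nor forces the limit boundary components to be exactly round rather than merely limits of ever-perturbed continua. (The appeal to the hydrodynamic normalization does not by itself pin down the limit, since different subsequential limits could have different circle-domain images.) A smaller point: the induction on $n$ announced at the outset is never used --- the osculation argument is direct --- so either delete it or replace the whole scheme by a genuine induction, e.g. the continuity-method proof, which is a different argument from the one you sketch.
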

Later, P. Koebe \cite{Koebe3} showed that the conjecture is true for a class of domains with some symmetry. For domains with various conditions on the ``limit boundary components'', one can see \cite{Bers,Denneberg,Grotzsch,Haas,Meschowski1,Meschowski2,Sario,Sterbel1,Sterbel2}.

In 1993, Z.X. He and O. Schramm \cite{H-S1} proved that the Koebe's conjecture holds for countably connected domains, which is a major breakthrough. Recently, K. Rajala \cite{Rajala} presented a new proof for this case by the idea of exhaustion.
\vskip0.15in
For uncountablly connected cases, O. Schramm \cite{Schramm} introduced the tool of transboundary extremal length, which has played a central role in recent developments on the  uniformization of fractal metric spaces, see \cite{Bonk1,B-M1,B-M2,H-L}. 
Then he prescribed the boundary shapes by studying the so called ``cofat domains'':
\begin{defn}\label{fat-cofat}
	Let $\tau>0$ be some constant. A set $A\subset\hat{\mathbb{C}}$ will be called $\tau-$fat, if for every $x\in A\cap\mathbb{C}$ and for every disk $B=B(x,r)$ centered at $x$ that does not contain $A$ we have $area(A\cap B)\geq\tau\cdot area(B)$. A connected domain $\Omega\subset\hat{\mathbb{C}}$ is cofat, if each connected component of its complement is $\tau-$fat for some $\tau>0$.
\end{defn}
By the tool of transboundary extremal length, Schramm established the following.
\begin{thm}\label{cofat}
	Every cofat domain in $\hat{\mathbb{C}}$ is conformally homeomorphic to a circle domain.
\end{thm}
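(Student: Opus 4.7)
The plan is to exhaust the cofat domain $\Omega$ by finitely connected approximations $\Omega_n \supset \Omega$, apply Theorem~\ref{multi-Koebe} on each $\Omega_n$ to obtain a conformal map $f_n$ onto a circle domain $D_n$, and then use transboundary extremal length to extract a convergent subsequence whose limit gives the desired uniformization of $\Omega$ itself. First I would enumerate a countable collection of components $\{K_i\}_{i\geq 1}$ of $\hat{\mathbb{C}}\setminus\Omega$ that is dense in the Hausdorff topology among all components, and form $\Omega_n$ by filling in every component except $K_1,\dots,K_n$. Each $f_n : \Omega_n \to D_n$ is then normalized by three prescribed point conditions so that compactness is not destroyed by the conformal automorphism group of $\hat{\mathbb{C}}$.

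Second, I would prove that the restricted family $\{f_n|_\Omega\}$ is normal. Since $f_n$ is conformal on the enveloping domain $\Omega_n$, it preserves transboundary modulus with respect to the natural boundary decomposition; in particular, for any curve family $\Gamma$ in $\Omega$, the transboundary modulus with weights on $\{K_i\}$ equals the Euclidean transboundary modulus of $f_n(\Gamma)$ in $D_n$. The crucial quantitative input is cofatness: for a $\tau$-fat set $A$ and a univalent image $A' = f_n(A)$, one obtains an inequality of the form $\tau\cdot\diam(A')^2 \lesssim \area(A')$ by pulling back an admissible metric that weights $A'$ by its diameter. Summing over boundary components yields $\sum_i \diam(K_i^{(n)})^2 \le C$ uniformly in $n$, where $K_i^{(n)} := f_n(K_i)$, and combining this global area bound with a standard annulus-modulus estimate produces the oscillation bound $|f_n(x)-f_n(y)|\to 0$ as $|x-y|\to 0$, uniformly in $n$, i.e.\ equicontinuity on $\Omega$.

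Passing to a locally uniformly convergent subsequence $f_{n_k} \to f$ on $\Omega$ and, by a diagonal argument, to Hausdorff limits $K_i^{(n_k)} \to K_i^*$ for each $i$, I would then verify that $f$ is a conformal homeomorphism of $\Omega$ onto $\Omega^* := \hat{\mathbb{C}}\setminus\bigcup_i K_i^*$. Conformality on the open set and local injectivity follow from Hurwitz's theorem applied to the schlicht limit. The main obstacle, and the step where cofatness is used most essentially, is ruling out collapse: one must show that distinct boundary components $K_i$ and $K_j$ do not share the same limit $K_i^* = K_j^*$, and that each $K_i^*$ is either a genuine disk or a point. This requires a lower bound on $\diam(K_i^{(n)})$ in terms of the intrinsic geometry of $K_i$ that is uniform in $n$, obtained by estimating transboundary modulus from below using the fatness of $K_i$ on the source side and transferring to nondegeneracy on the target side. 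Once injectivity and nondegeneracy are established, the density of $\{K_i\}$ among all complementary components guarantees that $\Omega^*$ has no extraneous boundary components, completing the verification that it is a circle domain.
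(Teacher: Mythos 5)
The paper does not actually prove Theorem~\ref{cofat}; it is quoted from Schramm's paper \cite{Schramm}. That said, your outline is essentially the standard Schramm strategy, and it is also the skeleton of the proof this paper gives for its own main result (Theorem~\ref{nondegenerate}) in Section~\ref{sec5}: exhaust by finitely connected domains $\Omega_n$, uniformize each by Theorem~\ref{multi-Koebe}, normalize, extract a normal limit $f$, and then use transboundary extremal length estimates (upper bounds on the source side from the geometry of the complement, lower bounds on the target side from the fatness of round disks) to show that $f(\Omega)$ is a circle domain. Your identification of where cofatness enters --- the area-versus-diameter bound $\diam^2\lesssim\area$ for the image components and the annulus-modulus equicontinuity estimate --- is the right mechanism, and the ``ruling out collapse'' step you single out is precisely what the paper's extended Carath\'eodory kernel convergence theorem (Theorem~\ref{ckc}) is designed to handle.

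There is one genuine gap in your setup. A cofat domain can have uncountably many complementary components; only the nontrivial ones (positive diameter, hence positive area by fatness) are countable, and those are the ones you should enumerate and restore one at a time, filling in \emph{all} point components at every finite stage. Your proposal to choose a countable collection ``dense in the Hausdorff topology among all components'' and to conclude at the end that ``density guarantees that $\Omega^*$ has no extraneous boundary components'' does not work: the complement of the limit domain necessarily contains the limits of the images of \emph{every} filled-in component, including the uncountably many point components, and density gives you no control over what these become. The correct argument is the one in Part~III of the proof of Theorem~\ref{ckc}: a trivial component $b$ has $EL(\Gamma_n)=0$ for the family of separating Jordan curves, conformal invariance forces $EL(\Gamma_n^*)=0$, and the fixed admissible metric built from diameters of image components (which requires the uniform fatness of the images) then forces $\ulcorner f(b)\urcorner$ to be a single point. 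A smaller inaccuracy: fatness is not preserved by a general univalent map, so the inequality $\tau\cdot\diam(A')^2\lesssim\area(A')$ should be justified by the fact that the image components are round disks or points (automatically $\pi/4$-fat), not by ``pulling back an admissible metric.'' With those two repairs the outline matches the established proof.
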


Resently X.G. Wang and Y. Zhong demonstrate that any infinitely connected attracting Fatou domain of a geometrically finite rational map is conformally homeomorphic to a cofat domain  \cite{W-Z}. More resent results related to the Koebe uniformization problem can be found in \cite{Bonk2,H-M,H-S2,H-S3,N-Y,Younsi}. However, the conjecture is still open.

\subsection{Main results}
Since the boundary components can be quite complicated, domains do not usually satisfy the ``cofat'' condition.
It is natural to study a class of more general domains. We will mention some necessary definitions herein to make the presentation clear and accessible.
\begin{defn}
	Let $\kappa>0$ be some constant. A connected set $E\subset\hat{\mathbb{C}}$ with positive area is called $\kappa-$nondegenerate if 
	$$area(E)\geq\kappa\cdot\diam(E)^2.$$
	A connected domain $\Omega\subset\hat{\mathbb{C}}$ is called nondegenerate if all its complementary components are $\kappa-$nondegenerate for some $\kappa>0$.
\end{defn}
Let $E$ be a nondegenerate connected subset in $\hat{\mathbb{C}}$. We denote
$$\kappa(E)=\frac{area(E)}{\diam(E)^2}.$$
Clearly, $\kappa(E)=\pi/4$ when $E$ is a closed disk; otherwise we must have $0<\kappa(E)<\pi/4$. Recall definition \ref{fat-cofat}, it is easy to show that a cofat domain is also a nondegenerate domain.

\vskip0.15in
The method used in establishing Koebe uniformization of cofat domains can not be applied directly to nondegenerate domains. The reason for this limitation lies in the more complicated boundary behavior exhibited by the complementary of nondegenerate domains. In section \ref{sec3}, we will introduce a new quentity named gap-ratio to describe the distribution of those complementary components. By using transboundary extremal length and exploring the geometrical properties of such domains, we show that the  bounded gap-ratio property will overcome the obstacle induced by its irregular boundary. Consequently, we obtain the main theorem of this paper:

\begin{thm}\label{nondegenerate}
	Every nondegenerate domain in $\hat{\mathbb{C}}$ with bounded gap-ratio is conformally homeomorphic to a circle domain.
\end{thm}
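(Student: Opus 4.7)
The plan is to follow the Schramm paradigm: exhaust $\Omega$ by finitely connected subdomains, apply Koebe's theorem (Theorem \ref{multi-Koebe}) to uniformize each one, and then use transboundary extremal length estimates to pass to the limit. The new geometric input, namely nondegeneracy together with bounded gap-ratio, must be strong enough to replace the cofat hypothesis in the compactness step that underlies the proof of Theorem \ref{cofat}.

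First I would fix an exhaustion $\Omega_1\subset\Omega_2\subset\cdots$ of $\Omega$ by finitely connected subdomains, obtained by filling in all but finitely many complementary components of $\Omega$. For each $n$, Theorem \ref{multi-Koebe} provides a conformal homeomorphism $f_n:\Omega_n\to D_n$ onto a circle domain. After post-composing with a M\"obius transformation, I would normalize the sequence $\{f_n\}$ by prescribing the images of three base points (or, equivalently, fixing one distinguished complementary disk in the target). The goal is to show that $\{f_n\}$ has a subsequential locally uniform limit $f:\Omega\to\hat{\mathbb{C}}$ that is a conformal homeomorphism onto a circle domain.

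The second and central step is to produce, for each fixed complementary component $K$ of $\Omega$, uniform upper and lower geometric bounds on the images $f_n(K)\subset\hat{\mathbb{C}}$ in terms of the transboundary extremal length of curve families separating designated compacta in $\Omega$. Since the target $D_n$ is a circle domain, one has standard area and diameter bounds in the image side. Transferring these back through $f_n$ is a conformal invariance statement, but to close the loop one needs an a priori upper bound for the transboundary modulus computed intrinsically in $\Omega$, using test weights supported on the complementary components. Nondegeneracy converts $\diam(K)^2$ into $\area(K)$ with a uniform constant, which is essential for summing such weights; bounded gap-ratio, introduced in Section \ref{sec3}, is designed to prevent a pathological clustering of complementary components of comparable size that would make the natural test metric fail to be square-summable.

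Once these uniform bounds are in place, the rest of the argument is standard: extract a subsequence $f_{n_k}$ converging locally uniformly on $\Omega$ to a conformal $f$, show via Hausdorff convergence and the circle-domain property of each $D_{n_k}$ that the image of every complementary component is either a round closed disk or a point, and use the transboundary modulus of separating annuli to rule out collapses between distinct components; this last step also yields injectivity of $f$ on $\Omega$. The main obstacle is clearly the second step. In the cofat setting each complementary component occupies a definite area fraction of any disk meeting it, which makes the transboundary test-metric construction almost automatic. For merely nondegenerate components, area can concentrate far from where it is needed, so the test-metric weights must be designed with care; converting the bounded gap-ratio hypothesis into a rigorous summability estimate with constants depending only on $\kappa$ and the gap-ratio bound is, in my view, the technical heart of the theorem.
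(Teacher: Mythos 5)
Your outline matches the paper's architecture almost exactly: normalize by a M\"obius map so that $\infty\in\Omega$, exhaust by finitely connected domains obtained by filling in all but finitely many non-trivial complementary components, uniformize each one by Theorem \ref{multi-Koebe}, extract a normal limit $f$, and control the limit boundary components by transboundary extremal length. The two steps you defer are, however, exactly where the content of the theorem lives, and one of them you describe in a way that would not work as stated. The role of the bounded gap-ratio is \emph{not} to make a diameter-weighted test metric square-summable --- that already follows from nondegeneracy alone, since $\sum_a\diam(\ulcorner a\urcorner)^2\leq\kappa^{-1}\sum_a area(\ulcorner a\urcorner)$ and the components are disjoint. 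Square-summability of that test metric is only needed for the \emph{lower} bound on extremal length in the image side (where the targets are circle domains, hence $\pi/4$-nondegenerate). What bounded gap-ratio actually buys is the \emph{upper} bound: to show that the transboundary extremal length of curves separating $\{q,b\}$ from $\infty$ tends to $0$ as $\dist(\ulcorner q\urcorner,\ulcorner b\urcorner)\to0$ (the ``well-distributed'' property of Section \ref{sec3}), the paper builds a sequence of radii $R_1>R_2>\cdots$ around a point $w\in\ulcorner b\urcorner$, where $R_{j+1}$ is the innermost reach of the components meeting both $C(w,R_j)$ and $C(w,R_j/2)$; without a hypothesis of this kind a single component could span arbitrarily many scales and let a curve cross all of them at the cost of a single weight $m(a)$. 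The gap-ratio bound gives $R_j/R_{j+1}\leq\rho$, which is what makes each annulus in the decomposition contribute at least $l^2/C(\kappa,\rho)$ to $A(m)$ for \emph{every} admissible metric $m$, yielding $EL\leq C(\kappa,\rho)/M\to0$.

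Your final step (``show via Hausdorff convergence \dots\ that the image of every complementary component is either a round closed disk or a point'') is likewise not standard here; it is the extended Carath\'eodory kernel convergence theorem of Section \ref{sec4}, proved by a contradiction that plays the two estimates above against each other: if $\ulcorner f(b)\urcorner$ were strictly larger than the Hausdorff limit of $\ulcorner f_n(b)\urcorner$, one finds curve families $\Gamma_n$ with $EL(\Gamma_n)\to0$ in the domain (by well-distributedness) while $EL(\hat{f}_n(\Gamma_n))$ stays bounded below (by the diameter test metric on the image circle domains), contradicting conformal invariance; a separate extremal-length argument shows point components stay points. So the plan is the right one, but both pillars --- Theorem \ref{gapratio-well} and Theorem \ref{ckc} --- remain unproved in your write-up, and the mechanism you propose for exploiting the gap-ratio hypothesis would need to be replaced by the annular decomposition just described.
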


\subsection{Outline of the paper}
This paper is organized as follows. In section \ref{sec2} we provide some preliminaries. Section \ref{sec3} is devoted to prove that a domain with bounded gap-ratio has well-distributed property, and we established the extended Carath\'eodory kernel convergence theorem for this case in section \ref{sec4}. The remaining part of the proof, arranged in section \ref{sec5}, will proceed naturally.
\vskip0.15in
The author would like to express his gratitude to Xiaoguang Wang for many useful comments during the formative period of this work in Zhejiang University.

\section{Preliminaries}\label{sec2}
Let $\Omega\subset\hat{\mathbb{C}}$ be a multi-connected domain and $\mathcal{E}(\Omega)=\hat{\mathbb{C}}/\sim$, where $z_1\sim z_2$ if and only if $z_1$ and $z_2$ belong to the same connected component of $\hat{\mathbb{C}}-\Omega$. The space $\mathcal{E}(\Omega)$ is the ends compactification of $\Omega$ and we define $\mathcal{C}(\Omega)=\mathcal{E}(\Omega)-\Omega$ as the complementary space of $\Omega$. Let $\pi_{\Omega}:\hat{\mathbb{C}}\rightarrow\mathcal{E}(\Omega)$ be the quotient map. The notation $\ulcorner p\urcorner$ will stand for $\pi^{-1}_{\Omega}(p)$ when $p\in\mathcal{E}(\Omega)$ or $p\subset\mathcal{E}(\Omega)$. We say $p\in\mathcal{C}(\Omega)$ is a non-trivial complementary component if its diameter $\diam(\ulcorner p\urcorner)>0$. Otherwise, $p$ is a trivial complementary component. The space of trivial and non-trivial complementary components will be denoted by $\mathcal{C}_t(\Omega)$ and $\mathcal{C}_{nt}(\Omega)$. 

\subsection{Transboundary extremal length}
Let the area measure $\sigma$ on the space $\mathcal{E}(\Omega)$ be equal to Lebesgue measure on $\Omega$ 
and equal to counting measure on $\mathcal{C}(\Omega)$. 
An extended metric for the domain $\Omega$ is a Borel measurable function $m:\mathcal{E}(\Omega)\rightarrow[0,\infty)$. 
The $m-$area of $\mathcal{E}(\Omega)$ is
$$A(m)=\int_{\mathcal{E}(\Omega)}m^2\mathrm{d}\sigma=\parallel m\parallel^2_2.$$
The extended metric $m$ is allowable if $A(m)<\infty$.
Let $I\subset\mathbb{R}$ be an interval, define $\gamma:I\rightarrow\mathcal{E}(\Omega)$ be a curve in $\mathcal{E}(\Omega)$, then the $m-$length of $\gamma$ is
$$L_{m}(\gamma)=\int_{\gamma^{-1}(\Omega)}m(\gamma(t))\mid\mathrm{d}\gamma(t)\mid
+\sum_{p\in\mathcal{C}(\Omega)\cap\gamma(I)}m(p).$$
Let $\Gamma$ be a collection of curves in $\mathcal{E}(\Omega)$. For any extended metric $m$, we set 
$$L_m(\Gamma)=\inf_{\gamma\in\Gamma}L_m(\gamma)$$
be the $m-$length of $\Gamma$.
Then the transboundary extremal length of $\Gamma$ is defined as
$$EL(\Gamma)=\sup_{m}\frac{L_m(\Gamma)^2}{A(m)},$$
where the supremum is taken over all allowable extended metrics on $\mathcal{E}(\Omega)$.
\begin{lemma}[Invariance of transboundary extremal length (\cite{Schramm})]
	Let $f:\Omega\rightarrow\Omega^*$ be a conformal homeomorphism between domains in $\hat{\mathbb{C}}$, and let $\Gamma$ be a collection of curves in $\mathcal{E}(\Omega)$. Set $\Gamma^*=\{f\circ\gamma:\gamma\in\Gamma\}$. 
	Then $EL(\Gamma)=EL(\Gamma^*)$.
\end{lemma}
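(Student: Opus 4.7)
The plan is to exhibit, for every allowable extended metric on one side, a corresponding extended metric on the other side with the \emph{same} length-to-area ratio on corresponding curve families, and then pass to suprema. The first step I would carry out is to record that the conformal homeomorphism $f:\Omega\to\Omega^*$ extends canonically to a bijection $\tilde f:\mathcal{E}(\Omega)\to\mathcal{E}(\Omega^*)$: on $\Omega$ it is simply $f$, and on $\mathcal{C}(\Omega)$ it is defined by sending each complementary component of $\hat{\mathbb{C}}\setminus\Omega$ to the corresponding component of $\hat{\mathbb{C}}\setminus\Omega^*$ (these components are determined purely by the topology of $\Omega$, which $f$ preserves). In particular $\tilde f$ should carry $\Gamma$ to $\Gamma^*$ curve by curve.

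Given an allowable $m^*$ on $\mathcal{E}(\Omega^*)$, my second step is to pull it back by setting
$$m(z)=m^*\bigl(f(z)\bigr)\,|f'(z)|\ \text{for}\ z\in\Omega,\qquad m(p)=m^*\bigl(\tilde f(p)\bigr)\ \text{for}\ p\in\mathcal{C}(\Omega).$$
Borel measurability of $m$ then follows automatically because $f$ and $\tilde f$ are homeomorphisms and $m^*$ is Borel.

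Third, I would verify the two conservation laws. For lengths, along the Euclidean portion of a curve $\gamma$ the identity $|d(f\circ\gamma)(t)|=|f'(\gamma(t))|\,|d\gamma(t)|$ absorbs the conformal factor, while along the discrete portion the bijection $\tilde f$ matches the two sums term by term, yielding $L_m(\gamma)=L_{m^*}(f\circ\gamma)$ and hence $L_m(\Gamma)=L_{m^*}(\Gamma^*)$. For areas, the classical conformal change-of-variables $dA(w)=|f'(z)|^2\,dA(z)$ equates the Lebesgue contributions to $A(m)$ and $A(m^*)$, while counting measure on $\mathcal{C}(\Omega)$ is transported to counting measure on $\mathcal{C}(\Omega^*)$; together these give $A(m)=A(m^*)$. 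Dividing produces $L_m(\Gamma)^2/A(m)=L_{m^*}(\Gamma^*)^2/A(m^*)$, and taking the supremum over $m^*$ delivers $EL(\Gamma)\ge EL(\Gamma^*)$. The reverse inequality comes from running the same construction with $f^{-1}:\Omega^*\to\Omega$.

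The step I expect to be the main obstacle is not any analytic computation but the topological setup in the first paragraph: since $f$ is a priori defined only on $\Omega$, I need to argue that conformal (hence topological) conjugacy of $\Omega$ with $\Omega^*$ forces a canonical bijection $\tilde f$ on the spaces of complementary components, and that the (at most countable) visits of any curve $\gamma$ to points of $\mathcal{C}(\Omega)$ correspond, as unordered subsets of the parameter interval, to the visits of $f\circ\gamma$ to $\mathcal{C}(\Omega^*)$. Once this is cleanly in place, the remainder of the argument is simply the conformal invariance of Dirichlet energy on $\Omega$ combined with the fact that both the boundary weights and the boundary contributions to length transform trivially under $\tilde f$.
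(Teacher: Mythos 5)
The paper does not prove this lemma at all --- it is quoted directly from Schramm's \emph{Transboundary extremal length} paper --- and your argument is precisely the standard pull-back proof given there: extend $f$ to a homeomorphism of ends compactifications, transport metrics by $m=(m^*\circ f)\,|f'|$ on $\Omega$ and $m=m^*\circ\tilde f$ on $\mathcal{C}(\Omega)$, check that lengths and areas are both preserved, and take suprema in both directions. Your proposal is correct and matches the cited source's approach.
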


\subsection{Well-distributed property}
Clearly, the transboundary extremal length is a generialization of traditional extremal length. 
We want to apply it to consider the distance betwenn two subsets in a multi-connected domain.

For each $b\in\mathcal{C}(\Omega)$ and $q\in\mathcal{E}(\Omega)$, let $\beta(b)$ be a Jordan curve in $\Omega-\{\infty\}$ that separates $b$ from $\infty$ in $\mathcal{E}(\Omega)$, $N(\Omega,b)$ be the connected component of $\mathcal{E}(\Omega)-\beta(b)$ that contains $b$ and $\Gamma_{\Omega}(q,b)$ be all the Jordan curves in $N(\Omega,b)-\{q,b\}$ satisfying: 
(i) $\gamma$ separate $\{q,b\}$ from $\infty$; 
or (ii) $\bar{\gamma}=\gamma\cup\{b\}$ and $\bar{\gamma}$ separate $q$ from $\infty$. 
Let $EL_{\Omega}(q,b)$ be the transboundary extremal length of $\Gamma_{\Omega}(q,b)$.
We introduce the following definition:
\begin{defn}[Well-distributed]\label{well-distributed}
	Let $\Omega\subset\hat{\mathbb{C}}$ be a multi-connected domain with $\infty\in\Omega$. 
	For each $b\in\mathcal{C}(\Omega)$ and for any $\varepsilon>0$, 
	there is some $\delta>0$ such that for all $q\in\mathcal{E}(\Omega)$ and all $\Omega_0$, 
	which is a union of $\Omega$ and some components of $\hat{\mathbb{C}}-\Omega$,
	the transboundary extremal length $EL_{\Omega_0}(q,b)<\varepsilon$ 
	when the distance between $\ulcorner q\urcorner$ and $\ulcorner b\urcorner$ is less than $\delta$. 
	Then we say $\mathcal{C}(\Omega)$ is well-distributed or $\Omega$ has well-distributed property.
\end{defn}

We note that the proof of theorem \ref{nondegenerate} relies on the key observation that 
under what condition a nondegenerate domain has well-distributed property. 
In the following section, we will present the notion of gap-ratio and prove that 
$\mathcal{C}(\Omega)$ is well-distributed when $\Omega$ is a nondegenerate domain with bounded gap-ratio.

\section{Bounded gap-ratio implies well-distributed property}\label{sec3}
Let us consider a connect compact set $K\subset\mathbb{C}$ as shown in figure \ref{gapratio}. 
Given some point $w\in\mathbb{C}$. If $w\not\in K$, 
we can use the distance $\dist(w,K)$ to discribe the relevant location bewteen them. 
Here we introduce a new quantity to measure the relationship 
between the shape of $K$ and the distance from $K$ to $w$. 
Let
\begin{equation*}
	Gr(K,w)=\frac{\displaystyle\sup_{z\in K}\mid z-w\mid}{\displaystyle\inf_{z\in K}\mid z-w\mid}.
\end{equation*}
We define $Gr(K,w)$ as the {\bf gap-ratio of $K$ to $w$}. 
It is easy to know that for all $K$, we have $1\leq Gr(K,w)<\infty$ as $\dist(w,K)\neq0$. 
Particularly, $Gr(K,w)=1$ if $K$ is a single point and $Gr(K,w)=1$ if $K$ is a circle or an arc cetered at $w$.
\begin{figure}[h!]
	\centering
	{\includegraphics[scale=0.8]{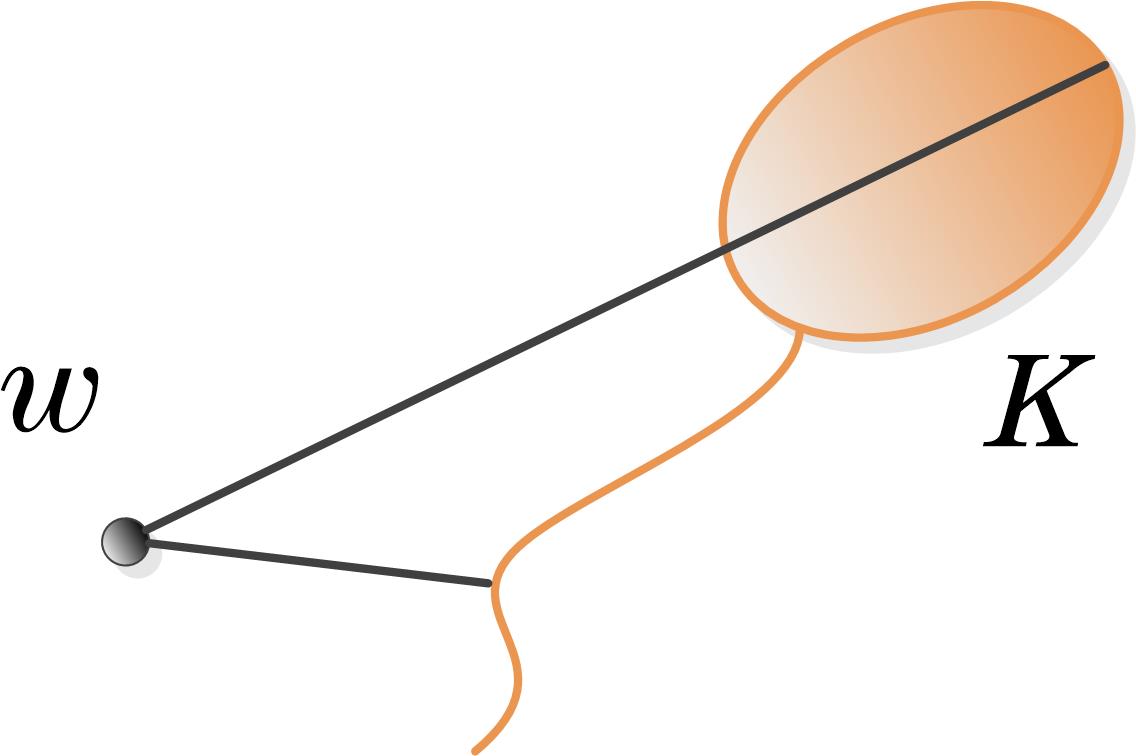}}
	\caption{}\label{gapratio}
\end{figure}

\begin{defn}
	Let $\Omega\subset\hat{\mathbb{C}}$ be a multi-connected domain with $\infty\in\Omega$. 
	For any $a,b\in\mathcal{C}(\Omega)$, the {\bf gap-ratio of $\ulcorner a\urcorner$ to $\ulcorner b\urcorner$} is
	\begin{equation*}
		Gr(\ulcorner a\urcorner,\ulcorner b\urcorner)=\displaystyle\sup_{w\in\ulcorner b\urcorner}\left(\frac{\displaystyle\sup_{z\in\ulcorner a\urcorner}\mid z-w\mid}{\displaystyle\inf_{z\in\ulcorner a\urcorner}\mid z-w\mid}\right).
	\end{equation*}
\end{defn}
Particularly, if we set $\ulcorner a\urcorner$ and $\ulcorner b\urcorner$ to be concentric circles, then 
$$Gr(\ulcorner a\urcorner,\ulcorner b\urcorner)=Gr(\ulcorner b\urcorner,\ulcorner a\urcorner)=1$$ 
for arbitrarily small $\dist(\ulcorner a\urcorner,\ulcorner b\urcorner)$.
\begin{defn}\label{gap-rho}
	Let $\Omega\subset\hat{\mathbb{C}}$ be a multi-connected domain with $\infty\in\Omega$. 
	We call $\Omega$ has bounded gap-ratio if for each $b\in\mathcal{C}(\Omega)$, 
	there is a $\delta>0$ such that for all $a\in\mathcal{C}(\Omega)$ satisfying
	$$\dist(\ulcorner a\urcorner,\ulcorner b\urcorner)=\displaystyle\inf_{z\in\ulcorner a\urcorner,w\in\ulcorner b\urcorner}\dist(z,w)<\delta,$$ 
	there must be
	\begin{equation}\label{gap}
		Gr(\ulcorner a\urcorner,\ulcorner b\urcorner)\leq\rho,
	\end{equation}
	where $\rho\geq1$ is a constant.
\end{defn}
We note that there is a subtle difference between the conditions of bounded gap-ratio and uniformly relative separation for nondegenerate domains. 
Let $\ulcorner b\urcorner$ be fixed and $\dist(\ulcorner a\urcorner,\ulcorner b\urcorner)$ tend to zero, 
the uniformly relative separation claims that the diameter of $\ulcorner a\urcorner$($\diam(\ulcorner a\urcorner)$) tends to zero.  
However, the condition (\ref{gap}) allows big $\diam(\ulcorner a\urcorner)$. For instance, we take $\ulcorner b\urcorner$ to be a disk, $\ulcorner a\urcorner$ to be an anulus sharing the same center of $\ulcorner b\urcorner$, then $\diam(\ulcorner a\urcorner)$ has a lower bound as $\dist(\ulcorner a\urcorner,\ulcorner b\urcorner)$ tends to zero. 
\begin{thm}\label{gapratio-well}
	Let $\Omega\subset\hat{\mathbb{C}}$ be a nondegenerate domain with bounded gap-ratio and $\infty\in\Omega$. Then $\mathcal{C}(\Omega)$ is well-distributed.
\end{thm}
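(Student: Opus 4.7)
The plan is to control $EL_{\Omega_0}(q,b)$ from above by producing, for every admissible extended metric $m$ on $\mathcal{E}(\Omega_0)$, a short candidate curve in $\Gamma_{\Omega_0}(q,b)$. The candidates will be type-(ii) arcs coming from circles centred at a point $w_0\in\partial\ulcorner b\urcorner$ closest to $\ulcorner q\urcorner$; averaging their lengths against the scale-invariant measure $dr/r$ on a suitable radial interval then yields the desired bound.

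Set $d_0=\dist(\ulcorner b\urcorner,\ulcorner q\urcorner)<\delta$ and pick $w_0\in\ulcorner b\urcorner$ with $\dist(w_0,\ulcorner q\urcorner)=d_0$. The bounded gap-ratio applied at $w_0$ forces $\ulcorner q\urcorner\subset \bar B(w_0,\rho d_0)$, and more generally $\sup_{z\in\ulcorner a\urcorner}|z-w_0|\leq\rho\inf_{z\in\ulcorner a\urcorner}|z-w_0|$ whenever $\dist(\ulcorner a\urcorner,\ulcorner b\urcorner)<\delta_b$. Fix once and for all a constant $R_2<\delta_b$ and set $R_1=\rho d_0$. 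For each $r\in[R_1,R_2]$, let $\gamma_r$ be the subarc of $\partial B(w_0,r)\setminus \ulcorner b\urcorner$ bounding the component of $B(w_0,r)\setminus\ulcorner b\urcorner$ that contains $\ulcorner q\urcorner$; this component exists because $w_0\in\partial\ulcorner b\urcorner$ and $r>\rho d_0$. Then $\gamma_r\cup\{b\}$ separates $q$ from $\infty$ in $\mathcal{E}(\Omega_0)$, so $\gamma_r$ lies in $\Gamma_{\Omega_0}(q,b)$ as a type-(ii) curve.

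Integrating against $dr/r$ and using Fubini in polar coordinates yields the key inequality
\[
\int_{R_1}^{R_2}L_m(\gamma_r)\,\frac{dr}{r}\leq \int_{A\cap\Omega_0}\frac{m(z)}{|z-w_0|}\,dA+\sum_{a\neq b}m(a)\log\frac{\sup_a}{\inf_a},
\]
where $A=B(w_0,R_2)\setminus B(w_0,R_1)$, $\sup_a$ and $\inf_a$ denote the extreme values of $|z-w_0|$ on $\ulcorner a\urcorner$, and the sum ranges over components intersecting some $\partial B(w_0,r)$ with $r\in[R_1,R_2]$. Cauchy--Schwarz bounds the first term by $\sqrt{2\pi\log(R_2/R_1)}\sqrt{A(m)}$. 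For the sum, the choice $R_2<\delta_b$ ensures that only components with $\dist(\ulcorner a\urcorner,\ulcorner b\urcorner)<\delta_b$ contribute, so the gap-ratio bound applies and $\log(\sup_a/\inf_a)\leq\diam(\ulcorner a\urcorner)/\inf_a$. Nondegeneracy gives $\diam(\ulcorner a\urcorner)^2\leq\area(\ulcorner a\urcorner)/\kappa$, and the gap-ratio further confines the pairwise disjoint sets $\ulcorner a\urcorner$ to the enlarged annulus $A'=B(w_0,\rho R_2)\setminus B(w_0,R_1/\rho)$, so
\[
\sum_{a\neq b}\frac{\diam(\ulcorner a\urcorner)^2}{\inf_a^2}\leq \frac{\rho^2}{\kappa}\int_{A'}\frac{dA}{|z-w_0|^2}=\frac{2\pi\rho^2}{\kappa}\log\frac{\rho^2 R_2}{R_1}.
\]
A second Cauchy--Schwarz then controls the component sum by $C_{\kappa,\rho}\sqrt{A(m)\log(R_2/R_1)}$.

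Combining the two estimates gives $\int_{R_1}^{R_2}L_m(\gamma_r)\,dr/r\leq C\sqrt{A(m)\log(R_2/R_1)}$, and the mean-value inequality supplies some $r_*\in[R_1,R_2]$ with $L_m(\gamma_{r_*})\leq C\sqrt{A(m)}/\sqrt{\log(R_2/R_1)}$. Therefore $EL_{\Omega_0}(q,b)\leq C^2/\log(R_2/(\rho d_0))$, which is below any prescribed $\varepsilon$ as soon as $\delta$ is chosen small enough, uniformly in $q$ and $\Omega_0$. The principal obstacle will be the component-sum estimate: the interplay between nondegeneracy and bounded gap-ratio must be arranged so that the resulting upper bound grows only like $\sqrt{\log(R_2/R_1)}$ rather than a negative power of $R_1$, and this is precisely where the hypothesis of bounded gap-ratio is indispensable. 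A secondary point requiring care is to verify that $\gamma_r$ is genuinely a type-(ii) curve, which rests on $w_0\in\partial\ulcorner b\urcorner$ guaranteeing, for every $r>\rho d_0$, the existence of a component of $B(w_0,r)\setminus\ulcorner b\urcorner$ adjacent to $\ulcorner q\urcorner$.
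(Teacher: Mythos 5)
Your proof is correct, and it reorganizes the estimate in a genuinely different way from the paper. Both arguments use the same family of test curves (arcs of circles centred at a point $w_0\in\ulcorner b\urcorner$, completed through $b$ as type-(ii) curves), both convert $\diam(\ulcorner a\urcorner)$ into $\sqrt{\kappa^{-1}\area(\ulcorner a\urcorner)}$ via nondegeneracy, and both invoke the gap-ratio to control how far a complementary component can extend radially. The difference is in the averaging. The paper integrates the length inequality with respect to $dr$ over each dyadic annulus $R_{2j}/2\leq r\leq R_{2j}$, where the radii $R_j$ are defined recursively by $R_{j+1}=\min_{a\in\mathcal{B}_{R_j}}\dist(\ulcorner a\urcorner,w)$; the gap-ratio enters as the bound $R_j/R_{j+1}\leq\rho$, which keeps the per-annulus constant uniform, and summing over $M$ pairwise disjoint annuli gives $EL\leq 4\pi\rho^2(1+\kappa^{-1})M^{-1}$. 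You instead integrate once against the scale-invariant measure $dr/r$ over the single annulus $\rho d_0\leq r\leq R_2$, and the gap-ratio enters pointwise, via $\inf_a\geq|z-w_0|/\rho$ on $\ulcorner a\urcorner$, to dominate $\sum_a\diam(\ulcorner a\urcorner)^2/\inf_a^2$ by $\rho^2\kappa^{-1}\int_{A'}|z-w_0|^{-2}\,dA$. Your version buys an explicit decay rate $EL\lesssim 1/\log(R_2/(\rho d_0))$, avoids the recursive construction entirely (and with it the need to check that each $\mathcal{B}_{R_j}$ is finite and nonempty and that no component is counted in two annuli), and makes the uniformity in $w_0$ automatic rather than an appeal to compactness of $\ulcorner b\urcorner$ at the end; the paper's discrete version is closer in spirit to Schramm's original cofat argument. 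Two small points to tidy up: you should also require $R_2<\dist(\ulcorner b\urcorner,\beta(b))$ so that the circles stay inside $N(\Omega_0,b)$ (the paper imposes $C(w,R_0)\subset\ulcorner N(\Omega_0,b)\urcorner$ for the same reason), and the case where $q$ lies in $\Omega_0$ or is a trivial component should be noted separately, since then $\ulcorner q\urcorner\subset\bar B(w_0,d_0)$ trivially without invoking the gap-ratio of $\ulcorner q\urcorner$.
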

\begin{proof}
	Let $\Omega_0$ be an arbitary domain that combined $\Omega$ with some components of $\hat{\mathbb{C}}-\Omega$. 
	For each $b\in\mathcal{C}(\Omega)$, recall that $N(\Omega_0,b)$ is the connected component of $\mathcal{E}(\Omega_0)-\beta(b)$ containing $b$. Take $w\in\ulcorner b\urcorner$ and $R_0>0$, 
	let $C(w,R_0)$ be the circle of radius $R_0$ cetered at $w$ such that $C(w,R_0)\subset\ulcorner N(\Omega_0,b)\urcorner$.
	Since $\beta(b)\in\Omega-\{\infty\}$, the complementary components intersecting $C(w,R_0)$ will not intersect $\beta(b)$. Hence they must be contained in $\ulcorner N(\Omega_0,b)\urcorner$. 
	Let $\mathcal{B}_{R_0}\subset\mathcal{C}_{nt}(\Omega)$ be the components that intersect both $C(w,R_0)$ and $C(w,R_0/2)$.
	Clearly, for each $a\in\mathcal{B}_{R_0}$, we have $\diam(\ulcorner a\urcorner)\geq R_0/2$. 
	This implies that $area(\ulcorner a\urcorner)\geq\kappa R^2_0/4$. 
	However, the area of $\ulcorner N(\Omega_0,b)\urcorner$ is limited, 
	we can conclude that $\mathcal{B}_{R_0}$ is a finite set. 
	Generally, we denote $\mathcal{B}_{R_j}(j=1,2,\ldots)$ as the collection of complementary components of $\Omega$ that intersect both $C(w,R_j)$ and $C(w,R_j/2)$, where $R_{j}=\min_{a\in\mathcal{B}_{R_{j-1}}}\dist(\ulcorner a\urcorner,w).$ The setting of $R_j$ ensures that there is no complementary component intersecting both $C(w,R_{j-1})$ and $C(w,R_j)$.
	
	Now we consider a subset of $\Gamma_{\Omega_0}(q,b)$, where $q\in\mathcal{E}(\Omega_0)$. Suppose that $\ulcorner q\urcorner$ is inside the circle $C(w,R_{2M+1})$ for some positive integer $M$. We take $r\in(R_{2M+1},R_1)$ and let $C(w,r)$ be the circle of radius $r$ centered at $w$. Moreover, we let $C'(w,r)=C(w,r)-\ulcorner b\urcorner$, then $C'(w,r)\cup\ulcorner b\urcorner$ separates $\ulcorner q\urcorner$ from $\infty$. Set $\gamma_r=\pi_{\Omega_0}(C'(w,r))$, we have $\{\gamma_r:r\in(R_M,R_1)\}\subset\Gamma_{\Omega_0}(q,b)$. For any extended metric $m$ on $\Omega_0$ such that $l=L_m(\Gamma_{\Omega_0}(q,b))>0$, we infer from the definition of $m-$length that
	\begin{equation}\label{l}
		l\leq L_m(\gamma_r)\leq\int_{C(w,r)\cap\Omega_0}m(z)\mid\mathrm{d}z\mid
		+\sum_{a\in\mathcal{C}(\Omega_0)\setminus\{b\}}\chi_a(C(w,r))m(a),
	\end{equation}
	where $\chi_a$ is defined as $\chi_a(A)=1$ if $\ulcorner a\urcorner\cap A\neq\emptyset$ and $\chi_a(A)=0$ if $\ulcorner a\urcorner\cap A=\emptyset$.
	Let $A_j$ be the annulus bounded by $C(w,R_j)$ and $C(w,R_{j}/2)$. In order to prevent the double counting of complementary components in the subsequent deduction, we integrate (\ref{l}) from $r=R_{2j}/2$ to $r=R_{2j}$ for $j=1,2,\cdots$. 
	Then we have
	\begin{equation*}
		\begin{aligned}
			l(R_{2j}-R_{2j}/2)&\leq\int^{R_{2j}}_{\frac{R_{2j}}{2}}\int_{C(w,r)\cap \Omega_0}m(z)|\mathrm{d}z|\mathrm{d}r+\sum_{a\in\mathcal{C}(\Omega_0)\setminus\{b\}}m(a)\int^{R_{2j}}_{\frac{R_{2j}}{2}}\chi_{a}(C(w,r))\mathrm{d}r\\
			&\leq\int_{A_{2j}\cap\Omega_0}m\mathrm{d}x\mathrm{d}y+\sum_{a\in\mathcal{C}(\Omega_0)\setminus\{b\}}m(a)\chi_{a}(A_{2j})\diam(\ulcorner a\urcorner)\\
			&\leq\int_{A_{2j}\cap\Omega_0}m\mathrm{d}x\mathrm{d}y+\sum_{a\in\mathcal{C}(\Omega_0)\setminus\{b\}}m(a)\chi_{a}(A_{2j})\sqrt{\kappa^{-1}area(\ulcorner a\urcorner)}.
		\end{aligned}
	\end{equation*}
	By Cauchy's inequality, 
	\begin{equation*}
		\begin{aligned}
			l^2(R_{2j}-R_{2j}/2)^2\leq&\left(\int_{A_{2j}\cap \Omega_0}m^2\mathrm{d}x\mathrm{d}y+\sum_{a\in\mathcal{C}(\Omega_0)\setminus\{b\}}m(a)^2\chi_{a}(A_{2j})\right)\\
			&\cdot\left(\int_{A_{2j}\cap \Omega_0}\mathrm{d}x\mathrm{d}y+\sum_{a\in\mathcal{C}(\Omega_0)\setminus\{b\}}\chi_{a}(A_{2j})\kappa^{-1}area(\ulcorner a\urcorner)\right).
		\end{aligned}
	\end{equation*}
	Since the complementary components that intersect $A_{2j}$ must be contained in $B(w,R_{2j-1})$, the sum of their area is controlled by the area of $B(w,R_{2j-1})$. Then we have
	\begin{equation*}
		\begin{aligned}
			\frac{l^2R^2_{2j}}{4}\leq&\left(area(B(w,R_{2j}))+\kappa^{-1}area(B(w,R_{2j-1}))\right)\\
			&\cdot\left(\int_{A_{2j}\cap \Omega_0}m^2\mathrm{d}x\mathrm{d}y+\sum_{a\in\mathcal{C}(\Omega_0)\setminus\{b\}}m(a)^2\chi_{a}(A_{2j})\right)\\
			\leq&\left(\pi R^2_{2j}+\kappa^{-1}\pi R^2_{2j-1}\right)\left(\int_{A_{2j}\cap \Omega_0}m^2\mathrm{d}x\mathrm{d}y+\sum_{a\in\mathcal{C}(\Omega_0)\setminus\{b\}}m(a)^2\chi_{a}(A_{2j})\right).
		\end{aligned}
	\end{equation*}
	This implies
	\begin{equation*}
		\begin{aligned}
			l^2\leq&\frac{\pi R^2_{2j}+\kappa^{-1}\pi R^2_{2j-1}}{R^2_{2j}/4}\left(\int_{A_{2j}\cap \Omega_0}m^2\mathrm{d}x\mathrm{d}y
			+\sum_{a\in\mathcal{C}(\Omega_0)\setminus\{b\}}m(a)^2\chi_{a}(A_{2j})\right)\\
			\leq&4\pi(1+\kappa^{-1})\frac{R^2_{2j-1}}{R^2_{2j}}\left(\int_{A_{2j}\cap \Omega_0}m^2\mathrm{d}x\mathrm{d}y
			+\sum_{a\in\mathcal{C}(\Omega_0)\setminus\{b\}}m(a)^2\chi_{a}(A_{2j})\right).
		\end{aligned}
	\end{equation*}
	Recall that $$R_{j+1}=\min_{a\in\mathcal{B}_{R_j}}dist(\ulcorner a\urcorner,w),\ \ \ j\in\mathbb{N}.$$
	There is at least one complementary component $a^*\in\mathcal{B}_{R_j}$ intersecting both $C(w,R_j)$ and $C(w,R_{j+1})$, but it does not contain any inner point of the disk bounded by $C(w,R_{j+1})$, see figure \ref{Rj-1Rj}. 
	\begin{figure}[h!]
		\centering
		{\includegraphics[scale=0.7]{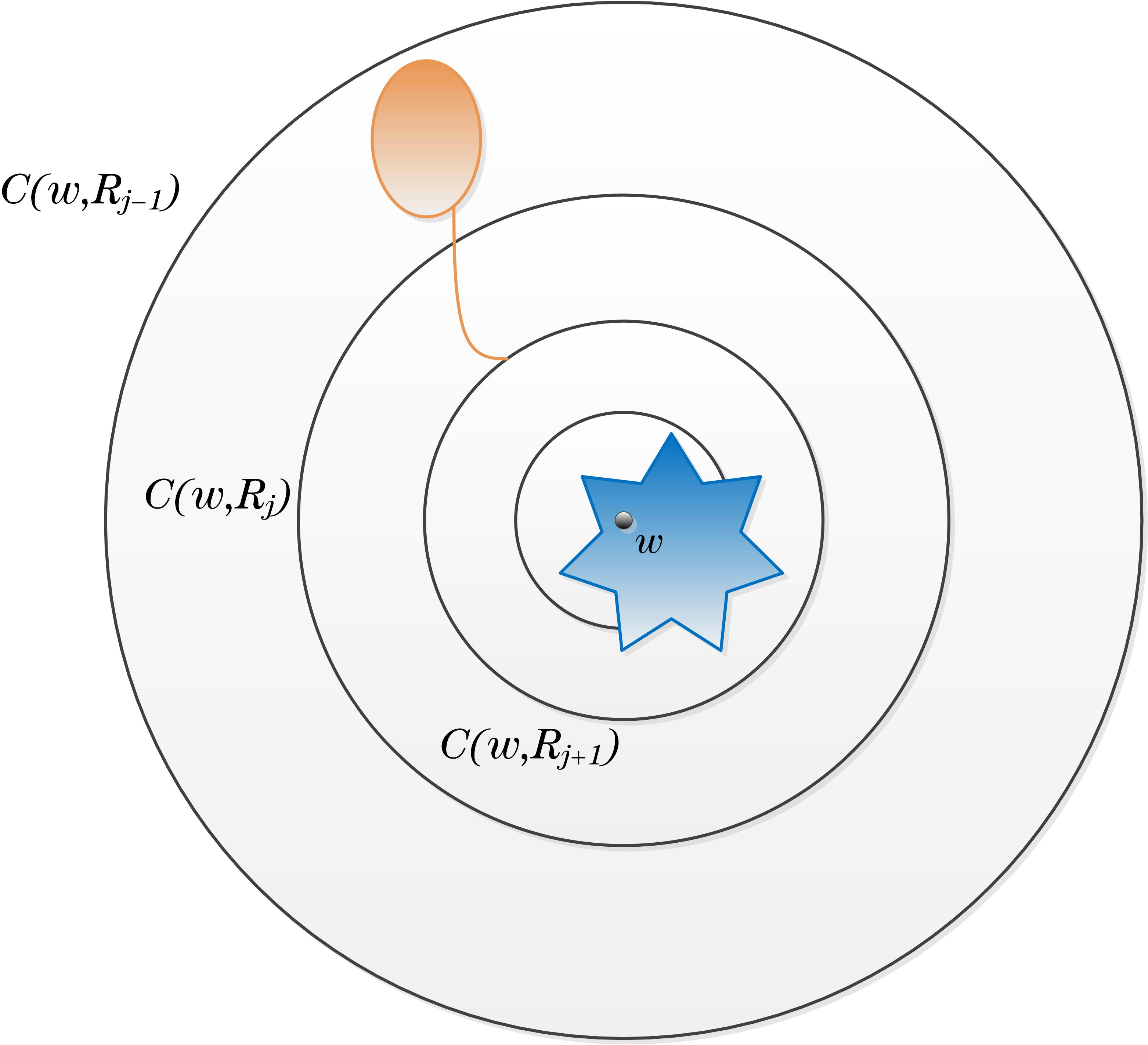}}
		\caption{}\label{Rj-1Rj}
	\end{figure}
	
	Since $\Omega$ is a domain with bounded gap-ratio, we take the first $R_0$ sufficiently small so that the gap-ratios of all complementary components inside $B(w,R_0)$ to $b$ are uniformly bounded. Hence we have
	\begin{equation*}
		\begin{aligned}
			\frac{R_{j}}{R_{j+1}}&\leq\frac{\displaystyle\sup_{z\in\ulcorner a^*\urcorner}\mid z-w\mid}{\displaystyle\inf_{z\in\ulcorner a^*\urcorner}\mid z-w\mid}
			\leq\displaystyle\max_{a\in\mathcal{B}_{R_j}}\displaystyle\sup_{w\in\ulcorner b\urcorner}\left(\frac{\displaystyle\sup_{z\in\ulcorner a\urcorner}\mid z-w\mid}{\displaystyle\inf_{z\in\ulcorner a\urcorner}\mid z-w\mid}\right)\\
			&\leq\displaystyle\max_{a\in\mathcal{B}_{R_j}}Gr(\ulcorner a\urcorner,\ulcorner b\urcorner)\leq\rho,
		\end{aligned}
	\end{equation*}
    where $\rho\geq1$ is a constant. 
	It then follows that
	\begin{equation}\label{l^2}
		l^2\leq4\pi\rho^2(1+\kappa^{-1})\left(\int_{A_{2j}\cap \Omega_0}m^2\mathrm{d}x\mathrm{d}y
		+\sum_{a\in\mathcal{C}(\Omega_0)-\{b\}}m(a)^2\chi_{a}(A_{2j})\right).
	\end{equation}
	
	We indicate that the complementary components intersecting $A_{2j}$ must be contained in $B(w,R_{2j-1})\setminus B(w,R_{2j+1})$, then for each $a\in\mathcal{C}(\Omega_0)$, there is at most one $j\in\{1,2,\cdots,M\}$ such that $\ulcorner a\urcorner\cap A_{2j}\neq\emptyset$. We add (\ref{l^2}) from $j=1$ to $j=M$, to get
	\begin{equation*}
		\begin{aligned}
			\sum^M_{j=1}l^2\leq&4\pi\rho^2(1+\kappa^{-1})
			\sum^M_{j=1}\left(\int_{A_{2j}\cap\Omega_0}m^2\mathrm{d}x\mathrm{d}y
			+\sum_{a\in\mathcal{C}(\Omega_0)\setminus\{b\}}m(a)^2\chi_{a}(A_{2j})\right)\\
			\leq&4\pi\rho^2(1+\kappa^{-1})\left(\int_{B(w,R_2)\cap\Omega_0}m^2\mathrm{d}x\mathrm{d}y
			+\sum_{a\in\mathcal{C}(\Omega_0)\setminus\{b\}}m(a)^2\chi_{a}(B(w,R_1))\right)\\
			\leq&4\pi\rho^2(1+\kappa^{-1})A(m).
		\end{aligned}
	\end{equation*}
	This implies $Ml^2\leq4\pi\rho^2(1+\kappa^{-1})A(m)$.
	Hence
	\begin{equation}\label{EL}
		EL(\Gamma)\leq4\pi\rho^2(1+\kappa^{-1})M^{-1}.
	\end{equation}

	Given $\varepsilon>0$, we can choose sufficiently large $M$ so that the right hand of (\ref{EL}) is less than $\varepsilon$. Then we have for each $w\in\ulcorner b\urcorner$, there is a $R_{2M+1}>0$ such that $EL(\Gamma)<\varepsilon$ when $d(\ulcorner q\urcorner,w)<R_{2M+1}$. According to the compactness of $\ulcorner b\urcorner$, the proof is completed.
\end{proof}

At the end of this section, we have derived from M\"obius invariance of fatness(\cite{Schramm}) and M\"obius invariance of crossratio that:
\begin{thm}[M\"obius invariance of gap-ratio]\label{mobius}
	Let $\Omega\subset\hat{\mathbb{C}}$ be a nondegenerate domain with bounded gap-ratio, and let $F$ be a M\"obius transformation. Then $F(\Omega)$ is also a nondegenerate domain with bounded gap-ratio.
\end{thm}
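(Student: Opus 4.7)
The plan is to split Theorem \ref{mobius} into two assertions---invariance of nondegeneracy and invariance of the bounded gap-ratio property---and reduce $F$ to a canonical form. Since any M\"obius transformation factors through Euclidean similarities (which trivially preserve both $\area/\diam^2$ and gap-ratios) and a single inversion, it suffices to take $F(z)=1/z$. By pre-composing with a translation if necessary, one may further assume the pole $p:=F^{-1}(\infty)=0$ lies in $\Omega$, so that $\infty\in F(\Omega)$ and the ambient definitions apply to the image domain.

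For the nondegeneracy part, I would invoke Schramm's M\"obius invariance of fatness \cite{Schramm} and adapt its distortion analysis to the (weaker) nondegeneracy setting. For each non-trivial complementary component $E$, one estimates $\area(F(E))$ and $\diam(F(E))$ via the conformal factor $|F'(z)|=|z|^{-2}$ and the hypothesis $\area(E)\geq\kappa\diam(E)^2$. The bounded gap-ratio assumption is essential here: it keeps the oscillation of $|z|$ on $E$ (and on nearby components) under control, so the distortion of $F$ on each $E$ stays uniformly bounded, yielding $\area(F(E))\geq\kappa'\diam(F(E))^2$ for a uniform $\kappa'>0$.

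For the bounded gap-ratio part, the key tool is M\"obius invariance of cross-ratio. Taking a limit in the identity $[F(z_1),F(z_2);F(w),F(w_2)]=[z_1,z_2;w,w_2]$ as $w_2\to\infty$, and using $|F(z)-F(\infty)|=c\cdot|z-p|^{-1}$ for some constant $c>0$, one derives
\[
\frac{|F(z_1)-F(w)|}{|F(z_2)-F(w)|}=\frac{|z_1-w|}{|z_2-w|}\cdot\frac{|z_2-p|}{|z_1-p|}.
\]
Taking suprema over $z_1,z_2\in\ulcorner a\urcorner$ gives
\[
Gr(F(\ulcorner a\urcorner),F(w))\leq Gr(\ulcorner a\urcorner,w)\cdot Gr(\ulcorner a\urcorner,p).
\]
When $\ulcorner a\urcorner$ lies in the hypothesized $\delta$-neighborhood of $\ulcorner b\urcorner$, the first factor is bounded by $\rho$. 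For the second, since $p\in\Omega$ we have $\dist(p,\ulcorner b\urcorner)>0$, and nondegenerate components near $\ulcorner b\urcorner$ have uniformly bounded diameter (by the finite area of any fixed neighborhood of $\ulcorner b\urcorner$), so $\sup_{z\in\ulcorner a\urcorner}|z-p|/\inf_{z\in\ulcorner a\urcorner}|z-p|$ is also bounded. The requisite $\delta'$ on the image side comes from the Lipschitz continuity of $F^{-1}$ on a neighborhood of $F(\ulcorner b\urcorner)$; its pole $F(\infty)$ lies away from this neighborhood precisely because $\infty\in\Omega$.

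The main obstacle lies in the nondegeneracy step: Schramm's proof is tailored to fatness, which is strictly stronger than nondegeneracy, so the argument must be recast to work with the weaker bound on $\area/\diam^2$. The bounded gap-ratio hypothesis is what makes this recasting possible, by preventing any complementary component from ``wrapping'' around the pole of $F$.
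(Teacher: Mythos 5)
Your overall route is the same one the paper gestures at (the paper gives no actual argument for Theorem \ref{mobius}; it only says the statement is ``derived from M\"obius invariance of fatness and M\"obius invariance of crossratio''). Your reduction to $F(z)=1/z$ with pole $p\in\Omega$, the cross-ratio identity, and the resulting bound $Gr(F(\ulcorner a\urcorner),F(w))\leq Gr(\ulcorner a\urcorner,w)\cdot Gr(\ulcorner a\urcorner,p)$ are all correct, and together with the bi-Lipschitz control of $F$ away from its poles they do give the bounded gap-ratio half.

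The problem is in the nondegeneracy half, where you assert that the bounded gap-ratio hypothesis is ``essential'' because it ``keeps the oscillation of $|z|$ on $E$ under control.'' That mechanism does not work: the gap-ratio hypothesis only bounds $Gr(\ulcorner a\urcorner,\ulcorner b\urcorner)$ for \emph{pairs of complementary components} at small mutual distance, and says nothing whatsoever about distances from a component to the pole $p$, which lies in $\Omega$ and is not a complementary component. If you genuinely needed gap-ratio to control $\sup_{z\in E}|z-p|/\inf_{z\in E}|z-p|$, the step would fail. Fortunately the step needs no such input: since $p\in\Omega$ is open, $\inf_{z\in E}|z-p|\geq d_0:=\dist(p,\hat{\mathbb{C}}\setminus\Omega)>0$ for every component $E$, and (normalizing so $\infty\in\Omega$) every component satisfies $\diam(E)\leq D_{\max}:=\diam(\hat{\mathbb{C}}\setminus\Omega)<\infty$. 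Hence $|F'(z)|=|z-p|^{-2}$ varies on $E$ by a factor at most $(1+D_{\max}/d_0)^2$, giving $\diam(F(E))\leq\diam(E)/d_0^2$ and $\area(F(E))\geq\area(E)/(d_0+D_{\max})^4$, so $\area(F(E))\geq\kappa\,(1+D_{\max}/d_0)^{-4}\diam(F(E))^2$ uniformly. (Your worry about a component ``wrapping around'' the pole is also vacuous: a connected complementary component of a domain cannot separate $p\in\Omega$ from the rest of $\Omega$.) So the theorem is true and your skeleton is right, but you should delete the claimed dependence of nondegeneracy on the gap-ratio hypothesis and replace it with the elementary distortion bound above; similarly, the uniform bound on $Gr(\ulcorner a\urcorner,p)$ in your second part comes from $d_0$ and $D_{\max}$, not from ``finite area of a fixed neighborhood of $\ulcorner b\urcorner$'' (a component meeting a $\delta$-neighborhood of $\ulcorner b\urcorner$ need not be contained in any fixed neighborhood).
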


\section{The extended Carath\'eodory kernel convergence theorem}\label{sec4}
The Carath\'eodory kernel convergence theorem(\cite{Goluzin}) tells us what the image $f(\Omega)$ is. However, it does not give much informatin about $\ulcorner f(b)\urcorner$ for any $b\in\mathcal{C}(\Omega)$. This section is devoted to establish the extended Carath\'eodory kernel convergence theorem for nondegenerate domains.
\begin{thm}\label{ckc}
	Let $\Omega\subset\hat{\mathbb{C}}$ be a $\kappa-$nondegenerate domain with bounded gap-ratio. Given a sequence of conformal maps
	$\{f_n:\Omega\rightarrow\hat{\mathbb{C}},n=1,2,\cdots\}$ with the limit $f$. Suppose that for each $n=1,2,\cdots$, there is a domain $\Omega_n$ containing $\Omega$ such that
	\begin{itemize}
		\item $\Omega_n$ is a union of $\Omega$ and a collection of connected components of $\hat{\mathbb{C}}-\Omega$;
		\item $\mathcal{C}(\Omega_n)$ is at most countable;
		\item $f_n$ extends to a conformal mapping $\hat{f}_n:\Omega_n\rightarrow\hat{\mathbb{C}}$, and each $\hat{f}_n(\Omega_n)$ is a $\kappa-$nondegenerate domain.
	\end{itemize}
	Let $b\in\mathcal{C}(\Omega)$. Then $\ulcorner f(b)\urcorner$ is the complement of the connected component of $\hat{\mathbb{C}}-B_0^*$ that contains $f(\Omega)$, where $B_0^*$ is any Hausdorff limit of a subsequence of $\ulcorner f_n(b)\urcorner$. Moreover, we have $\ulcorner f(b)\urcorner$ is a singleton if $\ulcorner b\urcorner$ is.
\end{thm}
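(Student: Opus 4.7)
The plan is to pass to a Hausdorff-convergent subsequence of $\ulcorner f_n(b)\urcorner$, transport the well-distributed property from $\Omega$ to the image domains $\hat{f}_n(\Omega_n)$ via conformal invariance of transboundary extremal length, and then identify $\ulcorner f(b)\urcorner$ with the ``filled-in'' Hausdorff limit. The bulk of the work goes into one inclusion; the reverse is forced by connectedness and the non-degeneracy of $\hat{f}_n(\Omega_n)$.

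First I will invoke the Blaschke selection theorem to pass to a subsequence along which $\ulcorner f_n(b)\urcorner$ converges in Hausdorff distance to a compact connected set $B_0^*\subseteq\hat{\mathbb{C}}$. A standard application of the open mapping theorem, using that $f_n'\to f'$ on compact subsets of $\Omega$, shows that for each $z\in\Omega$ some neighbourhood of $f(z)$ is eventually contained in $\hat{f}_n(\Omega_n)$ and therefore disjoint from $\ulcorner f_n(b)\urcorner$. Hence $f(\Omega)\cap B_0^*=\emptyset$, and the connected set $f(\Omega)$ lies in a single connected component $U$ of $\hat{\mathbb{C}}\setminus B_0^*$; the candidate for $\ulcorner f(b)\urcorner$ is the closed connected set $\hat{\mathbb{C}}\setminus U$.

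The central step is to show $B_0^*\subseteq\ulcorner f(b)\urcorner$. I pick $z_k\in\Omega$ converging to $\ulcorner b\urcorner$. Theorem \ref{gapratio-well} applied with $\Omega_0=\Omega_n$ yields $EL_{\Omega_n}(z_k,b)\to 0$ uniformly in $n$ as $k\to\infty$, and conformal invariance of transboundary extremal length transports this to $EL_{\hat{f}_n(\Omega_n)}(f_n(z_k),f_n(b))$. The $\kappa$-nondegeneracy of the images then supplies a converse to well-distributedness: using the classical annular extremal-length lower bound together with $\diam(\ulcorner b'\urcorner)^2\leq\kappa^{-1}\cdot area(\ulcorner b'\urcorner)$, small transboundary extremal length from $q'$ to $b'$ in a $\kappa$-nondegenerate domain forces $\dist(q',\ulcorner b'\urcorner)$ to be small, with a bound depending only on $\kappa$. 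Hence $\dist(f_n(z_k),\ulcorner f_n(b)\urcorner)\to 0$ uniformly in $n$, and letting $n\to\infty$ along the Hausdorff subsequence together with $f_n(z_k)\to f(z_k)$ yields $\dist(f(z_k),B_0^*)\to 0$. Since $f$ induces a homeomorphism between the ends compactifications of $\Omega$ and $f(\Omega)$, $f(z_k)$ also accumulates on $\ulcorner f(b)\urcorner$; every cluster point therefore lies in $B_0^*\cap\ulcorner f(b)\urcorner$. Because $B_0^*$ is connected and disjoint from $f(\Omega)$, it lies inside the single complementary component $\ulcorner f(b)\urcorner$, and connectedness of $\hat{\mathbb{C}}\setminus U$ upgrades this to $\hat{\mathbb{C}}\setminus U\subseteq\ulcorner f(b)\urcorner$.

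For the reverse inclusion $\ulcorner f(b)\urcorner\cap U=\emptyset$ I argue by contradiction. An accessible point $w\in\ulcorner f(b)\urcorner\cap U$ would be the limit of $f(z_k^*)\to w$ for some $z_k^*\in\Omega$; the ends homeomorphism forces $z_k^*\to\ulcorner b\urcorner$, and the previous estimate run in reverse puts $w\in B_0^*\subseteq\hat{\mathbb{C}}\setminus U$, a contradiction. If $w$ is inaccessible, the component of the interior of $\ulcorner f(b)\urcorner$ through $w$ is an open subset of $U$ whose topological boundary sits inside the accessible part of $\partial\ulcorner f(b)\urcorner\subseteq\hat{\mathbb{C}}\setminus U$, so a clopen argument on the connected set $U$ forces this component to exhaust $U$, contradicting $f(\Omega)\subseteq U\setminus\ulcorner f(b)\urcorner$. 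For the \emph{moreover} clause, if $\ulcorner b\urcorner=\{p\}$ is an isolated point then univalence of $\hat{f}_n$ near the puncture $p$ forces $\ulcorner f_n(b)\urcorner$ to collapse to a single point (a univalent function on a punctured disk either extends across the puncture or has a pole there), so $B_0^*$ is a singleton and $\ulcorner f(b)\urcorner=B_0^*$ is a singleton as well. The main obstacle I anticipate is producing the converse to the well-distributed property on $\hat{f}_n(\Omega_n)$ with constants \emph{uniform in $n$}: this is where the $\kappa$-nondegeneracy hypothesis on the images---rather than just the gap-ratio hypothesis on $\Omega$---is indispensable, since the estimate must survive passage to the Hausdorff limit.
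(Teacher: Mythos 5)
Your overall architecture (Hausdorff limits, conformal invariance of transboundary extremal length, $\kappa$-nondegeneracy controlling the area of a test metric on the image side) is the right one, but the load-bearing step is an unproved and, as stated, false lemma. You assert a ``converse to well-distributedness'': that small $EL_{\Omega'}(q',b')$ in a $\kappa$-nondegenerate domain forces $\dist(\ulcorner q'\urcorner,\ulcorner b'\urcorner)$ to be small ``with a bound depending only on $\kappa$.'' This cannot hold: transboundary extremal length is invariant under $z\mapsto\lambda z$ while distance is not, so any such bound must involve a scale (and you never fix one --- you do not normalize $f_n(\infty)=\infty$ nor establish the uniform containment $f_n(\beta(b))\subset B(0,R)$ that makes $A(m^*_n)\leq(1+\kappa^{-1})\pi R^2$ uniform in $n$). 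More seriously, even after fixing the scale the converse is not established: the family $\Gamma_{\Omega'}(q',b')$ contains the type-(ii) curves $\gamma$ with $\gamma\cup\{b'\}$ separating $q'$ from $\infty$, and such a $\gamma$ can have $\diam(\ulcorner\gamma\urcorner)$ far smaller than $\dist(\ulcorner q'\urcorner,\ulcorner b'\urcorner)$ when $\ulcorner b'\urcorner$ nearly surrounds $\ulcorner q'\urcorner$; the test metric then gives no lower bound proportional to the distance. What the paper actually proves is weaker and differently premised: it fixes a point $p^*\in\ulcorner f(b)\urcorner\setminus B^*$, uses $p^*\notin B^*$ to produce an arc $\eta$ joining $p^*$ to $\infty$ at distance $\geq\delta^*$ from $B_0^*$, and only then deduces $\diam(\ulcorner\gamma^*\urcorner)>\delta^*$ for \emph{every} curve of the family (each such curve, possibly together with $\ulcorner f_n(b)\urcorner$, separates $p^*$ from $\infty$ and so must meet $\eta$ while also reaching a neighborhood of $B_0^*$). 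Your central step applies the purported converse at the points $f_n(z_k)$, about which no such separating arc is available, so the step is unsupported --- and it is aimed at the inclusion $B_0^*\subseteq\ulcorner f(b)\urcorner$, which the paper obtains for free from the classical Carath\'eodory kernel convergence theorem.

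Two further gaps. For the hard inclusion $\ulcorner f(b)\urcorner\cap U=\emptyset$ you split into accessible and inaccessible points; the inaccessible case speaks of ``the component of the interior of $\ulcorner f(b)\urcorner$ through $w$,'' but an inaccessible point of a complementary continuum need not lie in its interior (the continuum may have empty interior), so the clopen argument has nothing to run on. The paper avoids accessibility entirely: an arbitrary $p^*\in\ulcorner f(b)\urcorner\setminus B^*$ determines a complementary component $q_n^*=\pi_{\Omega_n^*}(p^*)$ of the image domain, which pulls back under $\hat f_n^{-1}$ to $q_n\in\mathcal{E}(\Omega_n)$ with $\dist(\ulcorner q_n\urcorner,\ulcorner b\urcorner)\to0$, and the well-distributed property is then applied to the pair $(q_n,b)$ rather than to domain points. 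Finally, your \emph{moreover} argument assumes that a singleton component $\ulcorner b\urcorner=\{p\}$ gives a punctured disk around $p$ inside $\Omega_n$; it does not, since other complementary components may accumulate at $p$, so the removable-singularity/pole dichotomy is unavailable. The paper instead notes $EL(\Gamma_n)=0$ for the curves separating a point component from $\infty$ and transports this by conformal invariance to force $\ulcorner\hat f_n(b)\urcorner$ to be a point.
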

\begin{proof}
	We first normalize the sequence $f_n$ by requiring $\infty\in\Omega$ and $f_n(\infty)=\infty$ due to theorem \ref{mobius}. Without loss of generality, we assume that $B_0^*$ is the Hausdorff limit of the sequence $\ulcorner f_n(b)\urcorner$. Let $B^*$ be the complement of the connected component of $\hat{\mathbb{C}}-B_0^*$ that contains $\infty$. Our goal is to show that $B^*=\ulcorner f(b)\urcorner$. According to the Carath\'eodory kernel convergence theorem, we have $B^*\subset\ulcorner f(b)\urcorner$. Hence we only need to show that $\ulcorner f(b)\urcorner\subset B^*$.
	
	\vskip0.15in
	Striving for a contradiction, we assume $\ulcorner f(b)\urcorner-B^*\neq\emptyset$. Let $p^*\in\ulcorner f(b)\urcorner-B^*$ as shown in figure \ref{eta}. Set $\Omega^*_n=\hat{f}(\Omega_n)$,  $q^*_n=\pi_{\Omega^*_n}(p^*)$ and $q_n=\hat{f}_n^{-1}(q_n^*)$. (It is possible that $q_n\in\Omega_n$ or $q_n\in\mathcal{C}(\Omega_n)$.) Since $p^*\in\ulcorner f(b)\urcorner$, it follows that the distance in $\mathbb{C}$ between $\ulcorner q_n\urcorner$ and $\ulcorner b\urcorner$ tends to zero as $n\rightarrow\infty$. (Otherwise, we consider a Jordan curve in $\Omega$ which separates $\ulcorner b\urcorner$ from $\ulcorner q_n\urcorner$ for infinite many $n$, then the image of this curve under $f$ will separate $\ulcorner f(b)\urcorner$ from $p^*$.)
	\begin{figure}[h!]
		\centering
		{\includegraphics[scale=1]{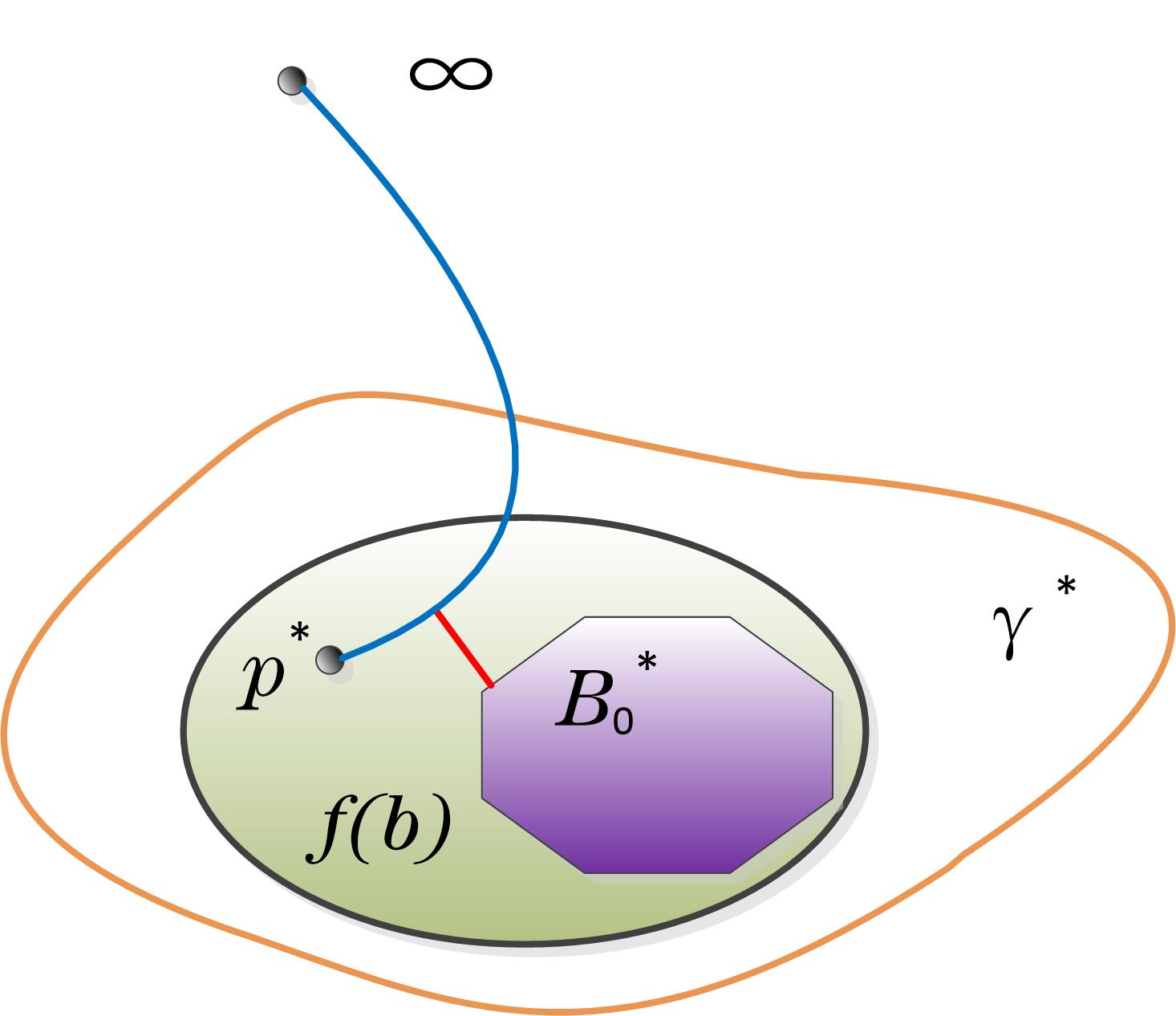}}
		\caption{}\label{eta}
	\end{figure}
	
	\vskip0.15in
	{\bf Part I: Prove that $EL(\Gamma_n)\rightarrow0$ as $n\rightarrow\infty$.}\par
	Recall that $\beta(b)\subset\Omega-\{\infty\}$ is a Jordan curve that 
	separates $b$ from $\infty$ in $\mathcal{E}(\Omega)$, 
	and $N(\Omega_n,b)$ is the connencted component of $\mathcal{E}(\Omega_n)\setminus\beta(b)$ that contains $b$. 
	Similarly, we let $N(\Omega^*_n,b)$ be the connencted component of $\mathcal{E}(\Omega^*_n)\setminus f_n(\beta(b))$ that contains $f_n(b)$.
	Let $\Gamma_n=\Gamma_{\Omega_n}(q,b)$ be the collection of all Jordan curves $\gamma\subset N(\Omega_n,b)\setminus\{b,q_n\}$ satisfies either of the following conditions:
	\begin{itemize}
		\item $\gamma$ separates $\{b,q_n\}$ from $\infty$;
		\item $\bar{\gamma}=\gamma\cup\{b\}$ and $\bar{\gamma}$ separates $q_n$ from $\infty$.
	\end{itemize}	
	
	Since $d(\ulcorner q_n\urcorner,\ulcorner b\urcorner)$ tends to zero as $n\rightarrow\infty$, 
	it follows from theorem \ref{gapratio-well} and definition \ref{well-distributed} that 
	$EL(\Gamma_n)$ tends to zero as $n\rightarrow\infty$. 
	
	\vskip0.15in
	Let $\Gamma^*_n=\{\hat{f}_n(\gamma):\gamma\in\Gamma_n\}$.
	
	\vskip0.15in
	{\bf Part II: Fix an extended metric to get a positive lower bound of $EL(\Gamma^*_n)$.}\par
	It follows from the normalization $f_n(\infty)=\infty$ that 
	there is some radius $R$ so that $f_{n}(\beta(b))\subset B(0,R)$ for all $n$. 
	We now define for all $z\in\Omega^*_n$:
	\begin{equation*}
		m^*_n(z)=
		\begin{cases}
			1, &z\in\Omega^*_n\cap B(0,R);\\
			0, &z\in\Omega^*_n-B(0,R),
		\end{cases}
	\end{equation*}
	and for $a^*\in\mathcal{C}(\Omega^*_n)$:
	\begin{equation*}
		m^*_n(a^*)=
		\begin{cases}
			\diam(\ulcorner a^*\urcorner), &\ulcorner a^*\urcorner\subset B(0,R);\\
			0, &\ulcorner a^*\urcorner\subset\hat{\mathbb{C}}-B(0,R).
		\end{cases}
	\end{equation*}
	It is clear that $m^*_n$ is an extended metric on $\mathcal{E}(\Omega^*_n)$. 
	In the following we will show that the ratio $L_{m^*_n}(\gamma^*)/A(m^*_n)$ 
	is strictly greater than zero under the metric $m^*_n$ for any $n$, 
	so that the transboundary extremal length $EL(\Gamma^*_n)$ is bounded away from zero as $n\rightarrow\infty$.
	
	\vskip0.15in
	Since $p^*\notin B^*$, there is a curve $\eta\subset\hat{\mathbb{C}}-B^*_0$ 
	that connects $p^*$ and $\infty$ as shown in figure \ref{eta}. 
	Let $\delta^*>0$ be a number that smaller than the distance from $\eta$ to $B^*_0$. 
	It is obvious that for sufficiently large $n$ 
	we have $\diam(\ulcorner\gamma^*\urcorner)>\delta^*$ for all $\gamma^*\in\Gamma^*_n$. 	
	
	Let $z_1,z_2\in\ulcorner\gamma^*\urcorner$, and for any $z\in\mathbb{C}$, set $\phi(z)=\mid z-z_1\mid$. 
	It is easy to know that $\phi(z)$ is a non-negative real valued function 
	and $\phi(\ulcorner\gamma^*\urcorner)$ covers the interval $[\ 0,\mid z_1-z_2\mid\ ]$.
	Parametering $\gamma^*$ as $\gamma^*:I\rightarrow\mathcal{E}(\Omega^*_n)$, where $I$ is an interval. 
	It is clear that $\gamma^{*-1}(\Omega^*_n)$ is a collection of connected components of $I(\gamma^{*-1}(\Omega^*_n))$. 
	We denote the collection of connected components as $\mathcal{J}$. 
	Then for each $J\in\mathcal{J}$, we have $\diam(\phi(\gamma^*(J)))\leq L_{m^*_n}(\gamma^*(J)).$
	
	On the other hand, for each $a^*\in\gamma^*\cap\mathcal{C}(\Omega^*_n)$, 
	it follows that
	$$\diam(\phi(\ulcorner\gamma^*(a^*)\urcorner))\leq \diam(\ulcorner\gamma^*(a^*)\urcorner)=m^*_n(a^*).$$ 
	Since the set of such $J$ and $a^*$ is countable, 
	and the interval $[\ 0,\mid z_1-z_2\mid\ ]$ is covered by the sets
	$$\{\phi(\gamma^*(J)):J\in\mathcal{J}\}\cup\{\phi(\ulcorner\gamma^*(a^*)\urcorner):a^*\in\gamma^*\cap\mathcal{C}(\Omega^*_n)\},$$
	this implies
	$$\mid z_1-z_2\mid\leq\sum_{J\in\mathcal{J}}L_{m^*_n}(\gamma^*(J))+\sum_{a^*\in\gamma^*\cap\mathcal{C}(\Omega_n)}m^*_n(a^*)=L_{m^*_n}(\gamma^*).$$
	Since $z_1,z_2$ are chosen arbitrarily, we conclude that
	\begin{equation}\label{lm-diam}
		L_{m^*_n}(\gamma^*)\geq \diam(\ulcorner\gamma^*\urcorner)>\delta^*.
	\end{equation}
	
	\vskip0.15in
	Moreover, we note that $\hat{f}_n(\Omega_n)$ is $\kappa-$nondegenerate. 
	Then for each non-trival component $a^*\in\mathcal{C}(\Omega^*_n)$, we have
	\begin{equation}
		\kappa(\ulcorner a^*\urcorner)=\frac{area(\ulcorner a^*\urcorner)}{\diam(\ulcorner a^*\urcorner)^2}\geq\kappa.
	\end{equation}
    Thus $m^*_n(a^*)^2\leq\kappa^{-1}area(\ulcorner a^*\urcorner)$ for $\ulcorner a^*\urcorner\subset B(0,R)$ and $m^*_n(a^*)^2=0$ for $\ulcorner a^*\urcorner\subset\hat{\mathbb{C}}-B(0,R)$.
	Moreover, we have
	\begin{equation}\label{area}
		A(m^*_n)\leq area(B(0,R)\cap\Omega^*_n)+\kappa^{-1}area(B(0,R))
		\leq(1+\kappa^{-1})\pi R^2.
	\end{equation}

    \vskip0.15in
	It then follows from (\ref{lm-diam}) and (\ref{area}) that
	\begin{equation*}
		EL(\Gamma^*_n)\geq\frac{(\delta^*)^2}{(1+\kappa^{-1})\pi R^2},
	\end{equation*}
	which implies that $EL(\Gamma^*_n)$ has a positive lower bound.
	
	\vskip0.15in
	According to the conformal invariance of transboundary extremal length, 
	we infer from the result in Part I that $\lim\limits_{n\rightarrow\infty}EL(\Gamma^*_n)=0$. 
	This contradiction establishes that $\ulcorner f(b)\urcorner=B^*$.
	
	\vskip0.15in
	{\bf Part III: Show that $\ulcorner f(b)\urcorner$ is a singleton if $\ulcorner b\urcorner$ is.}\par	
	We first suppose that $\ulcorner b\urcorner$ is a singleton and $n$ is a fixed positive integer. 
	Let $\Gamma_n$ be the collection of all Jordan curves in $N(\Omega_n,b)$ that separates $b$ from $\infty$
	and $\Gamma^*_n=\{\hat{f}_n(\gamma):\gamma\in\Gamma\}$. 
	We have $EL(\Gamma_n)=0$ from definition \ref{well-distributed}. 
	By the invariance of transboundary extremal length, we also have $EL(\Gamma^*_n)=0$. 
	This implies $\ulcorner\hat{f}_n(b)\urcorner$ is a single point. 
	Since this holds for every $n$ and $B^*_0$ is the Haussdorff limit of $\ulcorner f_n(b)\urcorner$, 
	$B^*_0$ must be a single point.
    Thus we have $\ulcorner f(b)\urcorner$ is a singleton.
\end{proof}

\section{Proof of the main theorem}\label{sec5}
Recall that a circle domain is a connected domain in $\hat{\mathbb{C}}$ such that 
every boundary component is either a cirlce or a point, 
hence it is $\pi/4-$nondegenerate.
\begin{proof}[Proof of theorem \ref{nondegenerate}]
	Let $\Omega\subset\hat{\mathbb{C}}$ be a nondegenerate domain with bounded gap-ratio. 
	Due to theorem \ref{mobius}, we assume $\infty\in\Omega$ for normalization.
	It is clear that the space $\mathcal{C}_{nt}(\Omega)$ must be countable. 
	Let $\mathcal{B}_1\subset\mathcal{B}_2\subset\cdots$ be a sequence of finite subsets of $\mathcal{C}_{nt}(\Omega)$ 
	such that $\cup^{\infty}_{n=1}\mathcal{B}_n=\mathcal{C}_{nt}(\Omega)$.
	For each $n=1,2,\ldots$, let $\Omega_n=\hat{\mathbb{C}}-\cup_{b\in\mathcal{B}_n}\ulcorner b\urcorner$. 
	By Koebe's finite connected uniformization theorem there is a conformal homeomorphism $F_n$ 
	such that $F_n(\infty)=\infty$ and $F_n(\Omega_n)$ is a circle domain. 
	Let $f_n$ be the limitation of $F_n$ on $\Omega$, 
	thus we obtain a normal family $\{f_n\}^\infty_{n=1}$. 
	Without loss of generality, we assume $f_n$ converges to $f$. 
	It is clear that $f$ is conformal on $\Omega$ and satisfies the normalization. 
	According to theorem \ref{ckc}, we have $\ulcorner f(b)\urcorner$ is a round disk 
	for each $b\in\bigcup^{\infty}_{n=1}\mathcal{B}_{n}$, 
	and $\ulcorner f(b)\urcorner$ is a single point 
	for each $b\in\mathcal{C}({\Omega})-\bigcup^{\infty}_{n=1}\mathcal{B}_{n}$. 
	This implies $f(\Omega)$ is a circle domain.
\end{proof}

\end{document}